\title{Complete $\lomegaone$-Sentences with Maximal Models in Multiple Cardinalities}
\author[J. Baldwin]{John Baldwin }
\address{Department of Mathematics, Statistics, and Computer Science, University of Illinois at Chicago, 851 S. Morgan
St. (M/C 249), Chicago, IL 60607-7045, USA}
\email{jbaldwin@uic.edu}
\keywords{Maximal models,  Complete sentences, Infinitary logic,
Homogeneously characterizable cardinals}
\subjclass[2010]{Primary 03C75, 03C35 Secondary 03C52, 03C30, 03C15}
\author[I. Souldatos]{Ioannis  Souldatos}
\address{4001 W.McNichols, Mathematics Department, University of Detroit Mercy, Detroit, MI 48221, USA}
\email{souldaio@udmercy.edu}
\date{\today}
\thanks{Funding: Research partially supported by Simons travel grants G5402  and 418609.}
\theoremstyle{plain}
\newtheorem{Thm}{Theorem}[subsection]
\newtheorem{question}[Thm]{Open Question}
\newtheorem{Cor}[Thm]{Corollary}
\newtheorem{Def}[Thm]{Definition}
\newtheorem{Rem}[Thm]{Remark}
\newtheorem{Not}[Thm]{Notation}
\newtheorem{Fact}[Thm]{Fact}
\newcommand{\M}{\ensuremath{\mathcal{M}}}
\newcommand{\N}{\ensuremath{\mathcal{N}}}
\newcommand{\cf}{\mathrm{cf}}
\newcommand{\lomegaone}{\ensuremath{\mathcal{L}_{\omega_1,\omega}}}
\def\hbar{{\bf h}}
\def\sbar{{\bf s}}
\begin{document}

\begin{abstract} In \cite{BKSoul} examples of incomplete sentences are given with maximal
models in more than one
cardinality. The question was raised whether one can find similar examples of complete sentences.
 In this paper we give
 examples of complete $\lomegaone$-sentences with maximal models in more than one
 cardinality.  From (homogeneous) characterizability of $\kappa$ we construct sentences with
 maximal models in $\kappa$ and  in one of $\kappa^+, \kappa^\omega,
 2^\kappa$ and more.
 Indeed, consistently we find sentences with maximal models in uncountably many distinct cardinalities.

\end{abstract}
\maketitle

We unite ideas from \cite{BFKL, BKL, Hjorthchar,Knightex} to find
complete sentences of $\lomegaone$ with maximal models in multiple
cardinals.  There have been a number of papers finding complete
sentences characterizing cardinals beginning with Baumgartner, Malitz
and Knight in the 70's, refined by Laskowski and Shelah in the 90's
and crowned by Hjorth's characterization of all cardinals below
$\aleph_{\omega_1}$ in the 2002.  These results have been refined
since. But this is the first paper finding complete sentences with
maximal models in two or more cardinals. All models of these sentences have cardinality less than $\beth_{\omega_1}$.

Our arguments combine and extend the techniques of building atomic
models by Fraiss\'{e} constructions using disjoint amalgamation,
pioneered by Laskowski-Shelah and Hjorth, with the notion of
homogeneous characterization  and tools from
Baldwin-Koerwien-Laskowski (\cite{BKL}). This paper uses specific techniques from \cite{BFKL, BKL,
SouldatosCharacterizableCardinals, Souldatoscharpow} and many proofs
are adapted from these sources.  
% We thank the referee for a perceptive and helpful report.

\textbf{Structure of the paper}:

In Section \ref{template}, we explain the merger techniques for
combining sentences that homogeneously characterize one cardinal
(possibly in terms of another).  We  adapt the methods of
\cite{SouldatosCharacterizableCardinals}
to get a  complete sentence with maximal models in $\kappa$ and
$\kappa^+$.

In Section \ref{sec:ltoomega} we present, for each homogeneously characterizable $\kappa$,
 an $\lomegaone$-sentence with
maximal models in $\kappa$ and $\kappa^\omega$ and no larger models.
The argument can be generalized to obtain maximal models in $\kappa$
and $\kappa^{\aleph_\alpha}$, for all countable $\alpha$.

Finally in Section \ref{ManyCardinalities}, we give various examples
of complete sentences with maximal models in a number of
cardinalities, modulo appropriate hypotheses on cardinal arithmetic.
For example, Corollary~\ref{maxmodels} asserts that  if
$\kappa$ is homogeneously characterizable and $\mu$ is minimal with
$2^{\mu} \geq \kappa$  there is an
$\lomegaone$-sentence $\phi_\kappa$ with maximal models in cardinalities
$2^{\lambda}$ for $\mu \leq \lambda \leq \kappa$ and no models larger
than $2^{\kappa}$.

\section{The general construction}\label{template}
\numberwithin{Thm}{section} \setcounter{Thm}{0}

In this section, for a cardinal $\kappa$ that admits a homogeneous
characterization (Definition\ref{Def:HomChar}), we prove that there
exists a complete sentence $\phi_\kappa$ of $\lomegaone$ that has
maximal models in $\kappa$ and $\kappa^+$ and no larger models. The
proof applies the notion of a receptive model from \cite{BFKL} and
merges a sentence homogeneously characterizing $\kappa$ with a
complete sentence encoding uniformly the transfer from characterizing
$\kappa$ to characterizing $\kappa^+$ by a $\kappa^+$-like linear
order. This template is extended from successor to other cardinals functions in later sections.

We require a few preliminary definitions.

\begin{Def}\label{Def:HomChar} Assume $\lambda\le\kappa$ are infinite cardinals,
$\phi$ is a complete $\lomegaone$-sentence in a vocabulary that
contains a unary predicate $P$, and $\M$ is the (unique) countable model of $\phi$. We say
\begin{enumerate}
\item  a
    model $\N$ of $\phi$ is of type $(\kappa,\lambda)$, if
    $|\N|=\kappa$ and $|P^\N|=\lambda$;
    \item For a countable structure $\M$, $P^\M$ is a set of {\em
        absolute indiscernibles} for $\M$, if $P^\M$ is infinite
        and every permutation of $P^\M$ extends to an
        automorphism of $\M$.
 \item $\phi$ \emph{homogeneously characterizes} $\kappa$, if
\begin{enumerate}
  \item $\phi$ has no model of size $\kappa^+$; \item $P^\M$
      is a set of absolute indiscernibles for the countable
      $\M$, and
 \item there is a maximal model of $\phi$ of type $(\kappa,\kappa)$.
\end{enumerate}
\end{enumerate}
\end{Def}

The next notation is  useful for defining mergers. We slightly
broaden the notion of `receptive' from \cite{BFKL} by requiring some
sorts of the `guest sentence' to restrict to $U$ while others include
new sorts in the final vocabulary.

\begin{Not}
\label{fixvoc1} Fix a vocabulary $\tau$ containing  unary predicates
$V,U$.  The sentence $\theta_0$ says
$V$ and $U$ partition the universe.

Let $\tau_1$ extend $\tau$ and let $\theta$ be a complete
$\tau_1$-sentence of $\lomegaone$ that implies $\theta_0$. Fix a
vocabulary $\tau'$  disjoint from $\tau_1$ that contains a unary
predicate $Q$, and let $\psi$ an arbitrary (possibly incomplete)
$\tau'$-sentence of $\lomegaone$. Let $\tau_2$ contain the symbols of
$\tau_1 \cup \tau'$, adding unary predicates $R$ and $S$.

The formula $\theta_1$ asserts that the $\tau'$-predicates hold only
of tuples from $R$ and the $\tau_1$-predicates only of tuples from
$S$, that $(S(x)\wedge R(x)) \leftrightarrow U(x)$, and that $U(x)
\leftrightarrow Q(x)$.
\begin{itemize}
\item If $U$ defines an infinite {\em absolutely indiscernible
    set} in the countable model of $\theta$, we call the pair
    $(\theta,U)$ \emph{receptive}. We call $\theta$
    \emph{receptive} if there is a $U$ such that $(\theta,U)$ is
    receptive and in that case we also call the countable model
    of $\theta$ a receptive model.

\item The \emph{merger} $\chi_{\theta,U,\psi,Q}$ of the pair
    $(\theta,U)$ and $(\psi,Q)$ is the conjunction of
    $\theta_1$ and $\psi^{U,Q}$, where $\psi^{U,Q}$
    is the result of interpreting $\theta$ on $S$,
    substituting $U$ for $Q$ in $\psi$ and interpreting $\psi$ on
    $R$. Thus $\chi_{\theta,U,\psi,Q}$ is a $\tau_2$-sentence.
    \item If in all models $\N$ of $\psi$, $Q^\N$ is the domain
        of $\N$, then we will drop $Q$ and write
        $\chi_{\theta,U,\psi}$.
    \item If $\M\models\theta$ and $\N\models\psi$, the
        \emph{merger model} $(\M,\N)$ denotes a model of
        $\chi_{\theta,U,\psi,Q}$ where the elements of $Q^\N$
        have been identified with the elements of $U^\M$, which
        is the intersection of $M$ and $N$.

$\M$ will be called the
\emph{host} model and $\N$ the \emph{guest
model}.
\end{itemize}
\end{Not}

Note that if $\phi$ and $P$ homogeneously characterize some $\kappa$,
then the countable model of $\phi$ is receptive. Fact~\ref{mergeprop}
extends the argument for Theorem 1.10 in \cite{BFKL} to reflect our more general
notion of merger.

\begin{Fact}\label{mergeprop}  Let $(\theta,U)$ be receptive
and $\psi$ a sentence of $\lomegaone$.
\begin{enumerate}
\item The merger $\chi_{\theta,U,\psi,Q}$ is a complete sentence
   if and only if $\psi$ is complete.
\item There is a $1$-$1$ isomorphism preserving function between isomorphism types of
   the countable  models of $\psi$ and the isomorphism types of countable models of the  merger
   $\chi_{\theta,U,\psi}$.

\item If there is a model $M_0$ of $\theta$ with
   $|M_0|=\lambda_0$ and   $|U^{M_0}|=\rho$  and also a model
   $M_1$ of $\psi$ with $|M_1|=\lambda_1$ and $|Q^{M_1}|=\rho$,
   then there is a model of $\chi_{\theta,U,\psi,Q}$ with
   cardinality $\max(\lambda_0,\lambda_1)$.
\end{enumerate}
\end{Fact}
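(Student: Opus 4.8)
The plan is to derive all three parts from a single mechanism: the absolute indiscernibility of $U$ lets one attach a guest model to a host along $U$ in an essentially canonical way. Throughout I will use the standard characterization that a consistent $\lomegaone$-sentence is complete if and only if it has, up to isomorphism, a unique countable model, together with the standing hypothesis that $\theta$ is complete, so that its countable model $\M_0$ is unique and $U^{\M_0}$ is a countably infinite absolutely indiscernible set.

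I would begin with part (3), which is the most concrete and isolates the gluing construction. Given $M_0\models\theta$ and $M_1\models\psi$ with $|U^{M_0}|=|Q^{M_1}|=\rho$, fix a bijection $h\colon Q^{M_1}\to U^{M_0}$ and build a structure on the union $M_0\cup M_1$ in which $M_0\cap M_1=U^{M_0}$, the map $h$ is the identity, the $\tau_1$-structure is inherited from $M_0$, and the $\tau'$-structure is inherited from $M_1$ with $Q$ read off as $U$. Since $\tau_1$ and $\tau'\setminus\{Q\}$ are disjoint, no symbol is interpreted twice; hence the $\tau_1$-reduct is $M_0\models\theta$ and reading $U$ for $Q$ returns $M_1\models\psi$, so the union satisfies $\theta\wedge\psi^{U,Q}=\chi_{\theta,U,\psi,Q}$. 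As $\rho$ is infinite, its cardinality is $\max(\lambda_0,\lambda_1)$ by cardinal arithmetic.

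For the canonicity needed in (1) and (2), the key lemma is this: if $f,g$ are two bijections used to attach a fixed guest $\N$ to a fixed copy of $\M_0$, then $g\circ f^{-1}$ is a permutation of $U^{\M_0}$, which by absolute indiscernibility extends to an automorphism $\sigma$ of $\M_0$; extending $\sigma$ by the identity on $\N\setminus Q^{\N}$ gives an isomorphism of the two merger models. Thus the isomorphism type of the merger depends only on those of $\M_0$ and $\N$, and since $\theta$ is complete, only on that of $\N$. For part (2) I would then define the map sending the isomorphism type of a countable $\N\models\psi$ to that of $(\M_0,\N)$; it is well-defined by this lemma, and it is injective (indeed isomorphism-preserving) because one recovers $\N$ up to isomorphism from any countable merger model as the substructure carried by $U$ together with the new $\tau'$-sorts, read with $Q=U$. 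Part (1) follows by the same correspondence applied with $Q$ retained: every countable model of $\chi_{\theta,U,\psi,Q}$ has $\tau_1$-reduct isomorphic to $\M_0$ and guest part a countable model of $\psi$, so the countable models of the merger are in isomorphism-preserving bijection with those of $\psi$; hence the merger has a unique countable model exactly when $\psi$ does, i.e.\ $\chi_{\theta,U,\psi,Q}$ is complete iff $\psi$ is.

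The main obstacle is the canonicity lemma: making precise that attaching the guest along different bijections of $U$ yields isomorphic mergers, and that the guest is recoverable from the merger. This is exactly where receptiveness is indispensable, since without the absolute indiscernibility of $U$ two embeddings of the guest's $Q$-part into $U$ could produce non-isomorphic mergers, breaking both the well-definedness in (2) and the ``only if'' direction of (1). A minor point to handle carefully is cardinality matching in the countable case: $U^{\M_0}$ is countably infinite, so one matches it with a countable $Q^{\N}$, restricting $h$ appropriately when $|Q^{\N}|$ is smaller.
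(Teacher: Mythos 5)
Your proposal is correct and takes essentially the same approach as the paper: the paper records this statement as a Fact deferring to \cite{BFKL}, adding only a remark that completeness rests on absolute indiscernibility — an isomorphism between the $Q$-parts of two countable guest models extends, via absolute indiscernibility of $U$, to an automorphism of the host — which is exactly your canonicity lemma. Your explicit gluing construction for (3) and the recovery of the guest model from the merger for (2) simply fill in the routine details (including the cardinality-matching caveat) that the paper leaves to the cited source.
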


\begin{Rem}\label{getcomp} {\rm The proof of 1) of Fact 1.10 in \cite{BFKL} is a bit quick.  The completeness
also depends on absolute indiscernability. Let $N$ and $N'$ be
countable models of  $\chi_{\theta,U,\psi,Q}$. Let $\alpha$ be a
bijection between $N$ and $N'$ which is a  $\tau_1$-isomorphism of
$S(N)\restriction \tau_1$ onto $S(N')\restriction \tau_1$.
Push-through the $\tau'$-structure on $R(N)$ to $R(N')$ to give a
structure $N''$ such that for $\sbar \in R(N)$, $P\in \tau'$, $N''\models
P(\alpha(\sbar))$ if and only if $N\models P(\sbar)$; so $R(N'')
\restriction \tau' \models \psi$.

 Let $\gamma$ be a permutation of
$R(N'')= R(N')$ that is an isomorphism from  the $\tau'$-structure
imposed on $R(N')$ by $\alpha$ to $R(N') \restriction \tau'$ (and so
fixes $U$ setwise). Now by absolute indiscernability, extend
$\gamma\restriction U$ to an automorphism of $N'$.  Then $\gamma
\circ\alpha$ is a $\tau_2$-isomorphism between $N$ and $N'$ as
required.}
\end{Rem}

Using this result we show if $\kappa$ is homogeneously
characterizable, we can construct a complete sentence of $\lomegaone$
that has maximal models in $\kappa$ and $\kappa^+$ and no larger
models. Before we proceed
 with the proof we introduce the tool by which we turn homogeneously
 characterizable cardinals into pairs of maximal models.

\begin{Thm}\label{Thm:ManyMaximal} Let $\theta$ homogeneously characterize $\kappa$. Then there exists an
$\lomegaone$-sentence $\chi= \chi_\theta$ in a vocabulary with a new
unary predicate symbol $B$, such that $(\chi,B)$ is receptive, $\chi$
homogeneously characterizes $\kappa$ and $\chi$ has maximal models of type $(|\M|, |B^{\M}|)= (\kappa,\lambda)$, for all
$\lambda\le\kappa$.
 \end{Thm}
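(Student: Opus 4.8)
The plan is to realize $\chi$ as a merger of a sentence homogeneously characterizing $\kappa$ with a trivial ``guest'' that merely marks a variable-sized subset of the indiscernible set, and then to extract every required maximal model from the single maximal model the characterization already supplies. Since $\kappa$ is homogeneously characterizable, fix a complete $\lomegaone$-sentence $\phi$ with distinguished unary predicate $P$ as in Definition~\ref{Def:HomChar}: $\phi$ has no model of size $\kappa^+$, the set $P$ is infinite and absolutely indiscernible in the countable model $\M_0$, and there is a maximal model $\M^{*}\models\phi$ of type $(\kappa,\kappa)$, so $|\M^{*}|=|P^{\M^{*}}|=\kappa$. By the observation preceding Fact~\ref{mergeprop} the pair $(\phi,P)$ is receptive. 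Let $\psi$ be the guest sentence, in a vocabulary whose only predicate beyond $Q$ is the new symbol $B$, asserting that $Q$ is the whole universe, that $B$ is infinite, and that $Q\setminus B$ is infinite; since $Q$ is always the domain we drop it and form the merger $\chi:=\chi_{\phi,P,\psi}$. Concretely $\chi$ is the conjunction of $\phi$ with the relativized clauses $\forall x\,(B(x)\to P(x))$, ``$B$ is infinite,'' and ``$P\setminus B$ is infinite,'' and $B$ will play the role of the distinguished predicate in all three conclusions.

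The guest $\psi$ has a unique countable model, namely a countable set carrying an infinite, co-infinite unary predicate $B$, so $\psi$ is complete and hence $\chi$ is complete by Fact~\ref{mergeprop}(1). To see that $(\chi,B)$ is receptive I would verify absolute indiscernibility of $B$ in the countable model $(\M_0,B)$ directly: a permutation of $B^{\M_0}$, extended by the identity on $P^{\M_0}\setminus B^{\M_0}$, is a permutation of the absolutely indiscernible set $P^{\M_0}$ that fixes $B^{\M_0}$ setwise, so it lifts to an automorphism of $\M_0$ which preserves $B$ and extends the given permutation. Finally $\chi\models\phi$, so $\chi$ inherits the absence of models of size $\kappa^+$.

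For the maximal models I would apply Fact~\ref{mergeprop}(3) with host $\M^{*}$ (so $\lambda_0=\kappa$ and $\rho=|P^{\M^{*}}|=\kappa$) and, for each infinite $\lambda\le\kappa$, a guest model of size $\kappa$ whose predicate $B$ has size $\lambda$ with infinite complement. This yields a model of $\chi$ of cardinality $\max(\kappa,\kappa)=\kappa$ in which $|B|=\lambda$; concretely it is $\M^{*}$ expanded by a subset $B\subseteq P^{\M^{*}}$ with $|B|=\lambda$ and $|P^{\M^{*}}\setminus B|$ infinite (of size $\kappa$ when $\lambda<\kappa$, countable when $\lambda=\kappa$), a model of type $(\kappa,\lambda)$. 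It is maximal: any model of $\chi$ properly extending it would, after restriction to $\tau(\phi)$, be a model of $\phi$ properly extending the maximal model $\M^{*}$, which is impossible. Taking $\lambda=\kappa$ exhibits a maximal model of type $(\kappa,\kappa)$, and together with the inherited $\kappa^{+}$-bound and the receptiveness of $(\chi,B)$ this shows $\chi$ homogeneously characterizes $\kappa$ via $B$; the values $\lambda<\kappa$ supply the remaining maximal models. Finite $\lambda$ are vacuous here, since $B$ is infinite in every model of $\chi$.

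The only genuinely load-bearing step is the maximality claim: Fact~\ref{mergeprop}(3) delivers a model of type $(\kappa,\lambda)$ but says nothing about its being maximal, so the argument must route maximality through the maximality of $\M^{*}$ for $\phi$ together with the implication $\chi\models\phi$. The supporting points, namely completeness via Fact~\ref{mergeprop}(1), the $\kappa^{+}$-bound, and the absolute indiscernibility of $B$, all reduce to the absolute indiscernibility of $P$ in the countable model and to the extension-of-permutations trick used above.
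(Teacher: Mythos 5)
Your proposal is correct, and it shares the paper's skeleton --- build $\chi$ as a merger over the receptive pair $(\phi,P)$ coming from the homogeneous characterization, and extract maximality of every $(\kappa,\lambda)$ model from the maximality of the host's single $(\kappa,\kappa)$ model --- but your guest is genuinely different from the paper's. The paper's guest has vocabulary $\{A,B,p\}$ and asserts that $A,B$ partition the universe and $p$ is a total function from $A$ onto $B$ with every fiber $p^{-1}(x)$ infinite; the merger identifies $A$ with the host's indiscernible set, so $B$ is a \emph{new sort disjoint from the host}, and $|B|=\lambda$ is arranged by partitioning the $\kappa$ indiscernibles into $\lambda$ infinite fibers. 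Indiscernibility of $B$ is then checked by lifting a permutation of $B$ to a fiber-respecting permutation of $A$ and thence to an automorphism of the host, and maximality uses that in any extension the host is frozen, hence $A$ is frozen, hence $B=p(A)$ is frozen because $p$ is a total surjection. Your version eliminates the projection: $B$ is just an infinite, co-infinite subset of $P$, so models of $\chi$ are \emph{expansions} rather than proper enlargements of models of $\phi$, maximality becomes immediate (a proper extension must enlarge the universe, hence properly extends the maximal $\phi$-reduct), and indiscernibility of $B$ follows by extending a permutation of $B$ by the identity on $P\setminus B$. This simplification loses nothing for the later applications (Theorems \ref{kkomega} and \ref{Cor:CharPowerset2}), which only require a maximal model whose absolutely indiscernible predicate has prescribed size $\lambda$. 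Two further remarks. First, your observation that finite $\lambda$ is unattainable is not a defect of your construction but an imprecision in the statement of Theorem \ref{Thm:ManyMaximal} itself: receptivity of $(\chi,B)$ forces ``$B$ is infinite'' into the complete sentence $\chi$, so ``for all $\lambda\le\kappa$'' can only mean all infinite $\lambda\le\kappa$; indeed the paper's own guest, as literally written without a clause ``$B$ is infinite,'' has countable models with $|B|$ of every finite size and so is incomplete, whence the paper needs the same implicit restriction. Second, your inference that countable categoricity of $\psi$ yields completeness is valid, but note it is special to $\lomegaone$: if some model disagreed with the countable one on a sentence $\tau$, then $\psi\wedge\neg\tau$ would be satisfiable with no countable model, contradicting downward L\"owenheim--Skolem.
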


\begin{proof}
 Fix a receptive pair $(\theta,U)$ such that  $\theta$  homogeneously
characterizes $\kappa$.  Define a new vocabulary $\tau=\{A,B,p\}$
where $A,B$ are unary predicates and $p$ is a binary predicate. Let
$\phi_0$ be the conjunction of: (a) $A,B$ partition the universe and
(b) $p$ is a total function from $A$ onto $B$ such that each
$p^{-1}(x)$ is infinite. By Theorem 1.10 of \cite{BFKL}  there is a
complete sentence $\phi$ that implies $\phi_0$ and  in the countable
model of $\phi$, $B$ is a set of absolute indiscernibles.

Now merge $\theta$ and $\phi$ by identifying $U$ and $A$. The merger
$\chi=\chi_{\theta,U,\phi,A}$ is a complete sentence which does not have
any models of size $\kappa^+$. Let $\M$ be a maximal model of
$\theta$ with $U^\M$ of size $\kappa$, and $\N$ a model of $\phi$ of
type $(\kappa,\lambda)$, for some $\lambda\le\kappa$. Then the merger
model $(\M,\N)$ is a maximal model of $\chi$ with $|(\M,\N)|=\kappa$ and $|B^{(\M,\N)}|=
\lambda$, which proves the result
\end{proof}

A word of caution: In the countable model of $\theta$, the predicate
$U$ defines a set of absolute indiscernibles in the countable model,
and the same is true for the countable model of $\phi$ and  $B$. So,
we started with two models and two sets of absolute indiscernibles.
In the merger $\chi_{\theta,U,\phi,A}$, the absolute indiscernibles
of the host model (model of $\theta$) are used to bound the size of
$A$ from the guest model (model of $\phi$). Moreover, the predicate
$B$ from the guest model defines a set of absolute indiscernibles in
the merger model too.

The construction in the next theorem  extends by the use of
Theorem~\ref{Thm:ManyMaximal} the 50 year old  argument that if
$\kappa$ is characterized then so is $\kappa^+$ to obtain maximal
models in distinct cardinalities.

\begin{Thm}\label{getcompnew}
Suppose $\theta$ is a complete sentence of $\lomegaone$ that homogeneously characterizes $\kappa$. Then there is
 a complete sentence  $\psi= \psi_\theta$ of $\lomegaone$ such that
 $\psi$ characterizes $\kappa^+$ and has maximal models in $\kappa$
 and $\kappa^+$.
\end{Thm}

Proof. We can replace the $\theta$ homogeneously characterizing $\kappa$ by
the $\tau =\tau_{\chi_\theta}$-sentence $\chi_\theta$ from
Theorem\ref{Thm:ManyMaximal} that homogeneously characterizes
$\kappa$ with set of absolute indiscernibles $B$ and which has maximal models of type
$(\kappa,\lambda)$ for each $\lambda \leq \kappa$.

Let $\psi =\psi_\theta$ be the conjunction of the following
sentences, in a vocabulary $\tau'$ that contains  unary predicates
$Q_1,Q_2$,  binary predicates $<, P$,  and for each $k$-ary $R \in
\tau$ a $k+1$-ary predicate $\hat R$ in $\tau'$. The axioms assert:

\begin{enumerate}
\item $Q_1,Q_2$ partition the universe.
\item  $(Q_2,<)$ is a dense linear order without endpoints.

\item $P$ is the graph of a function from $Q_1$ to $Q_2$.
\item For every predicate $R(\vec{x}$) in $\tau$, if $\hat
    R(a,\vec{x})$ in $\tau'$ holds, then all members of $\vec{x}$
    belong to $D_a=P^{-1}(a)\cup \{y\in Q_2|y<a\}$.

 \item For every $a$ in $Q_2$, the set $D_a$ with the
     $\tau$-structure obtained by interpreting $\hat
     R(a,\vec{x})$ as $R(\vec{x})$
    and $\{y\in Q_2|y<a\}$ as $B^{D_a}$  is isomorphic to a model
    of $\theta$.
\end{enumerate}

Note that  for any $a$ and $\hat R$, $\hat
    R(a,\vec{c})$ holds for a vector $\vec{c}$
of distinct elements of $\{y:y<a\}$ if and only if it holds for all
such tuples (by the absolute indiscernability of $B$ in models of
$\chi_\theta$).

 We prove any two countable models, $M, N$ of $\psi$ are
isomorphic.  Fix an isomorphism $\alpha$ from $(Q_2^M,<^M)$ onto
$(Q_2^N,<^N)$.  As in Remark~\ref{getcomp} we now extend the $\alpha
\restriction \{y:y<^Ma\}$ to a family of
$\tau'$-isomorphisms $\alpha_a$ between $M \restriction D^M_a$ and $N
\restriction D^N_{\alpha(a)}$. By the categoricity of $\theta$, there exists a $\tau$-isomorphism $\rho$ between $M \restriction
D^M_a$ and $N
\restriction D^N_{\alpha(a)}$ (and $\rho$ induces a
$\tau'$-isomorphism). But we don't know {\em a priori} that $\rho
\restriction \{y:y<^Ma\}= \alpha \restriction \{y:y<^Ma\}$.
 Let $\gamma$ be a permutation of
$\{y:y<^N\alpha(a)\}$ that is an  order isomorphism between the order
given by $\rho$ and the one imposed by $\alpha$. Now extend
$\gamma$ by absolute indiscernability to an automorphism of
$D^N_{\alpha(a)}$. Then $\alpha_a=\gamma \circ\rho$ is a $\tau'$-isomorphism
between $D^M_a$ and $D^N_{\alpha(a)}$ that extends $\alpha\restriction \{y:y<^Ma\}$. For $b<a$, $\alpha_a$ and $\alpha_b$
agree on their common domain, since their domains intersect only
on $Q_2$.

Now we claim that $\bigcup_{a \in U^M} \alpha_a$ is an isomorphism
from $M$ to $N$.  It is well-defined since we noted that any $\alpha_a$ and $\alpha_b$
agree on their common domain which is a subset of $Q_2$ and the union maps all of $M$ to all of $N$.

The
$Q_1,Q_2, < , P$ are clearly preserved. Finally, this is a
$\tau'$-isomorphism because each atomic $\tau'$-formula $\hat
R(\cdot,\vec{c})$ holds on the domain of some $\alpha_a$.

Moreover, note that if $M$ is a model of $\psi$ so that
all the $D^M_a$ are {\em maximal} $(\kappa,\lambda)$-models of $\chi_{\theta}$ then $(Q_2^M,<^M)$ is $\lambda^+$-like. So $|M|
\le\max(\kappa,\lambda^+)$ and there is a model in which that maximum is attained.
Now when $\lambda = \kappa$ there is a maximal $\tau'$-model  $M$  of $\psi$ with size $\kappa^+$ and when
$\lambda<\kappa$, $M$ is a maximal model of size $\kappa$; in both cases, $Q_2^M$ has size $\lambda^+$.
 $\qed_{\ref{getcompnew}}$

Note that Theorem~\ref{getcompnew} is a trivial corollary of
Theorem~\ref{Thm:ManyMaximal} if the answer to the following question
is positive.  But after considerable effort trying to modify the construction of \cite{Knightex}, the question seems to be harder
than
Theorem~\ref{getcompnew}.

\begin{question} Is there is a complete sentence of
$\lomegaone$ that has a $(\kappa^+,\kappa)$-model in every
cardinality?  More strongly, is there such a first order
$\aleph_0$-categorical theory?
\end{question}

Particular examples of homogeneously characterizable cardinals are given by \cite{BaumgartnersHanfNumber},
\cite{Hjorthchar},
\cite{Soul1}, \cite{Souldatoscharpow}, \cite{SouldatosCharacterizableCardinals}, \cite{BKL}.

\begin{Fact}[Theorem 4.29, \cite{Souldatoscharpow}] If $\aleph_\alpha$ is a characterizable cardinal, then
$2^{\aleph_{\alpha+\beta}}$ is homogeneously characterizable, for all $0 < \beta < \omega_1$.
\end{Fact}

\begin{Fact} If $\kappa$ is homogeneously
characterizable, then
so is each\footnote{1) Baumgartner; see also Theorem 3.4 of
    \cite{Souldatoscharpow}; 2) Theorem
    3.6,\cite{SouldatosCharacterizableCardinals};  3)Corollary 5.6, \cite{Soul1}.} of the following.

\begin{enumerate}
\item  $2^\kappa$ ;
\item  $\kappa^\omega$;
\item \label{SoulPow}
    $\kappa^{\aleph_\alpha}$, for all countable ordinals
    $\alpha$.
\end{enumerate}
 \end{Fact}

Finally a result of slightly different character; we note a direct
proof for each $n$ of a sentence $\phi_n$ that homogeneously
characterize $\aleph_n$ ($n> 0$) and has $(\aleph_n,\aleph_k)$ models
for $k\le n$.

\begin{Fact}[\cite{BKL}]\label{Thm:BKLExamples} For each $n\in\omega$, there is a complete $\lomegaone$-sentence
$\phi_n$ such that
\begin{itemize}
 \item $\phi_n$ homogeneously characterizes $\aleph_n$ with
     $(\phi_n,P)$ receptive; and
 \item for each $k\le n$, there is a maximal model $N_k$ of $\phi_n$ of type $(\aleph_n,\aleph_k)$.
\end{itemize}
\end{Fact}

Since in this last example, the complete sentence\footnote{The proof
that  these $(\aleph_n,\aleph_k)$ models exist  requires the use of
both frugal amalgamation   and an amalgamation which allows
identification. We say a class has frugal amalgamation
 if for every amalgamation triple $A, B,C$ there is an amalgam on
 the union of the domains with no identifications. See
 \cite{BKS}.}
has maximal models of type $(\aleph_n,\aleph_k)$, for all $k \leq n$
there is no need to appeal to Theorem \ref{Thm:ManyMaximal} for an
intermediate sentence.

\section{Maximal models in $\kappa$ and $\kappa^\omega$}\label{sec:ltoomega}
\numberwithin{Thm}{section}
\setcounter{Thm}{0}

Working similarly to Section \ref{template} we construct a complete
$\lomegaone$-sentence that admits maximal models in $\kappa$ and
$\kappa^\omega$, and has no larger models.  But we must define a
sentence that transfers from characterizing $\kappa$ to characterizing
$\kappa^{\omega}$ rather than to $\kappa^+$.

Although proved earlier (\cite{SouldatosCharacterizableCardinals}),
the following result can be viewed as an extension of the argument
for Theorem~\ref{getcompnew}.  We first have to replace well-known
fact that ${\rm Th} (Q,<)$ is first order $\aleph_0$-categorical by a
proof that the tree $\lambda^{<\omega}$ along with a set of dense
paths can be axiomatized in $\lomegaone$.  Then we extend
the trick illustrated in Theorem~\ref{getcompnew} to bound the number
of successors of each node in the tree by $\kappa$ and thus the
number of paths by $\kappa^\omega$. The detailed axiomatization of a
structure with these properties, but in a different
 vocabulary,
 by a complete sentence of
$\lomegaone$ and the proof that it characterizes $\kappa^\omega$
appears in \cite{SouldatosCharacterizableCardinals}.  The extension to show $\kappa^{\omega}$ is {\em homogeneously} characterized
requires the further analysis of Hjorth construction in the same paper.

For any vocabulary $\tau$ and $\tau$-predicate $R$ and $\tau$
structure $N$ we write $R^N$ for the interpretation of $R$ in $N$.

\begin{Thm}\label{ltoomega} Let $\phi$ be a complete
$\lomegaone(\tau)$-sentence (in vocabulary $\tau$) with a set of absolute indiscernibles $U$ that homogeneously characterizes
$\kappa$.
 Then there is
a complete $\lomegaone(\tau_2)$-sentence $\phi^*$ (in vocabulary
$\tau_2\supset\tau$) such that $\phi^*$
characterizes\footnote{$\phi^*$ does {\em not} homogeneously characterize $\kappa^{\omega}$.}
$\kappa^{\omega}$.

Moreover, let  $\mu$ be the least infinite cardinal such that
$\kappa\le \mu^\omega$. If $\mu>\aleph_0$, then  $\phi^*$  has
maximal models in $\kappa$ and $\kappa^\omega$, and no models larger
than $\kappa^\omega$. If $\mu=\aleph_0$, $\phi^*$ has maximal models
only in $2^{\aleph_0}$.
\end{Thm}

\begin{proof} We first show the structure with universe $M =
\omega^{< \omega} \cup \{f\in \omega^\omega\colon f {\rm \ is\
eventually\ constant}\}$ with the following relations has a Scott
sentence. Fix a vocabulary $\tau_1$ with unary predicates  $T,P, L_n$
for finite $n$, binary predicate $\unlhd$, and constant $0$
(none of which are in $\tau$).   The sentence $\phi_1$ in
$\lomegaone(\tau_1)$ describes the following structure on $M$: $T$
(tree) and $P$ (paths) partition the universe; $T$ denotes $\omega^{<
\omega}$ and $P$ denotes the eventually constant sequences.
$(M,\unlhd)$ is a tree of height $\omega+1$ ($\unlhd$ is a partial
order, with initial element $0$, such that the set of predecessors of
any element $v$ of $M$ is linearly ordered and includes $0$). An
element has finitely many predecessors if $v\in T$, while $P$
contains the elements of infinite height. But $v\in P$ implies every
$u\unlhd d$ has finite height. That is, $T^M = \bigcup_{n<\omega}
L_n$, where $L_n$ picks out the elements of `height' $n$. One easily
defines an `immediate extension' predicate $E(u,v)$ on $M^2$ (when $v
\not \in P$), which holds just if $u \unlhd v$ and $L_n(u)
\leftrightarrow L_{n+1}(v)$. Note that for any $v \in M$, there is a
unique definable restriction $v \restriction n$ (for any $n$ not
greater than the height of $v$).

Include in $\phi_1$ the crucial axioms for $\tau_1$-categoricity:

\begin{enumerate}
\item Each $v$ with finite height has infinitely many immediate
    extensions.
\item Each $v$ with finite height
    has infinitely many extensions in $P$.
\end{enumerate}

We first prove $\phi_1$ is a Scott sentence for the $\tau_1$-structure $(M,\unlhd,T,P, (L_n)_{n\in\omega},0)$.
 We construct a back-and-forth system between arbitrary models $M$ and $N$ of
$\phi_1$. Suppose  $A$ and $B$ are finite subsets of $M$ and $N$
respectively, and $\alpha\colon A \approx B$. Take any $c \in M\setminus A$.

If $c\unlhd a$ for some $a\in A$, the extension is easy.
If not, there exists a unique $a_c \in A$, maximal with $a_c\unlhd c$ and apply
axiom 1 or 2 depending whether $c \in T$ or $P$.

This completes the first step in the argument. Without loss of
generality we may replace $\phi$ by the $\chi_\phi$ from
Theorem~\ref{Thm:ManyMaximal} that has {\em maximal}
$(\kappa,\lambda)$ models for each $\lambda\leq \kappa$.

Now we use a slightly more complicated version of the strategy
for Theorem~\ref{getcompnew}. Form $\tau_2$ by adding a binary symbol
$D(\cdot,\cdot)$ to $\tau_1$ and an $n+1$-ary predicate $Q(x,\cdot)$
for each $n$-ary $\tau$-predicate $Q(\cdot)$.

Let $\phi^*$ be the conjunction of $\phi_1$ with the assertions that for $u\neq v \in T$
the sets $D(u,\cdot)$ and $D(v,\cdot)$ are disjoint and they are also disjoint from $T$ and $P$.

Require further that for each $u\in
V$, the set $D(u,\cdot)$ (under the relations $Q(u,\cdot)$) is a
model of $\phi$ and that the set $R(u,\cdot)$ of the
immediate successors of $u$ is also the set $U(u,\cdot)$ of absolute
indiscernibles of the model $D(u,\cdot)$ of $\phi$.  Since $\phi$
homogeneously characterizes $\kappa$, if $N \models \phi^*$,
$|R^N(u,\cdot)| \leq \kappa$.

To see that any countable
models of $M, N$ of $\phi^*$ are isomorphic, note first that we
already showed their $\tau_1$-reducts are isomorphic. The extension
to a $\tau_2$-isomorphism uses the absolute indiscernibility of $\{u:
u \unlhd v\}$ in $D(v,\cdot)$ as in Theorem~\ref{Thm:ManyMaximal}.

 If
for every $u\in V$, the set $D(u,\cdot)$ is a maximal model of
$\phi^*$ of type $(\kappa,\lambda)$, then the resulting tree is
$\lambda$-splitting and there is an associated maximal model of
$\phi^*$ of size $\max\{\kappa,\lambda^\omega\}$.

Take $\mu$ to be the least infinite cardinal such that $\kappa\le
\mu^\omega$. Thus, $\phi^*_\kappa$ has a maximal model of size $\mu^\omega$. Moreover, for any $\lambda$ with
$\mu\le\lambda\le\kappa$ by cardinal arithmetic
$\mu^\omega\le\lambda^\omega\le\kappa^\omega\le
(\mu^\omega)^\omega=\mu^\omega$. Also
for any $\lambda<\mu$, $\lambda^\omega<\kappa$ and $\phi^*$ has a maximal model of size $\max\{\kappa,\lambda^\omega\}=\kappa$.
Note that the model is maximal if each $D(\cdot,\cdot)$ is a maximal $(\kappa,\lambda)$-model and {\em each} path through
resulting tree on $\lambda^{<\omega}$ is realized.

For the last claim, if $\mu = \aleph_0$, then the only possible trees are on
$\aleph_0^{<\omega}$ and they must be $\aleph_0$-splitting. So there is
 a maximal model of $\phi^*$ of size
$\max\{\kappa,\aleph_0^\omega\}=2^{\aleph_0}$,
and every model of size less than $2^{\aleph_0}$ is not maximal.
\end{proof}

An easy application of Shoenfield's absoluteness theorem proves that for a countable vocabulary and for a sentence $\phi
\in \lomegaone$ the
existence of a countable maximal model is absolute. While the existence of a model in $\aleph_1$ is also absolute (For instance,
apply Keisler's
completeness theorem for $L(Q)$.), absoluteness fails for the
existence of a maximal model of size $\aleph_1$.

\begin{Cor}\label{nonabsmax} ``Existence of a maximal model in $\aleph_1$''
is not an absolute notion for models of ZFC. More precisely, there
exist two transitive models of ZFC, $V\subset W$, $\phi\in \lomegaone^V$, both $V$ and $W$ satisfy that ``$\phi$ has
models in $\aleph_1$", where $\aleph_1$ is interpreted in the corresponding model, and $V\models$ ``$\phi$ has a
maximal model in $\aleph_1$", while $W\models$ ``$\phi$ does not have a maximal model in $\aleph_1$".
\end{Cor}
\begin{proof}
 Let $\kappa$ equal $\aleph_1$. By \ref{Thm:BKLExamples}, $\kappa$ is homogeneously characterizable. Let $\phi^*$ be the complete
sentence given by Theorem~\ref{ltoomega}. Let $V$ be a model of CH
and $W$ an extension of $V$ in which CH fails. In $V$, $\phi^*$ has
maximal models only in $(2^{\aleph_0})^V=\aleph_1^V$. In $W$,
$\phi^*$ has maximal models only in $(2^{\aleph_0})^W>\aleph_1^W$.
\end{proof}

Replacing the construction that characterizes $\kappa^\omega$ from \cite{SouldatosCharacterizableCardinals} with the
construction that characterized $\kappa^{\aleph_\alpha}$, $\alpha<\omega_1$, from \cite{Soul1} (cf. Theorem
\ref{SoulPow}) one can prove the following theorem.

\begin{Thm}\label{kkalpha} Assume $\alpha<\omega_1$, $2^{\aleph_\alpha}<\kappa<\kappa^{\aleph_\alpha}$ and there is
a sentence
$\phi_\kappa$ that
homogeneously characterizes $\kappa$.
Then there is a
complete sentence $\phi^*_\kappa$ that has maximal models in $\kappa$ and $\kappa^{\aleph_\alpha}$, and no models larger than
$\kappa^{\aleph_\alpha}$.
\end{Thm}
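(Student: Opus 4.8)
The plan is to mirror the proof of Theorem~\ref{kkomega} essentially verbatim, substituting the construction that characterizes $\kappa^{\aleph_\alpha}$ from \cite{Soul1} (cf. Theorem~\ref{SoulPow}) for the $\kappa^\omega$-construction of \cite{SouldatosCharacterizableCardinals} used in Theorem~\ref{ltoomega}. First I would apply Theorem~\ref{Thm:ManyMaximal} to arrange that $\phi_\kappa$ has maximal models of type $(\kappa,\lambda)$ for every $\lambda\le\kappa$; this flexibility in the size of the absolute-indiscernible predicate is what lets us dial the amount of splitting in the structure that encodes $\kappa^{\aleph_\alpha}$.

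The core step is to establish the $\aleph_\alpha$-analogue of Theorem~\ref{ltoomega}: from the complete sentence $\phi_\kappa$ homogeneously characterizing $\kappa$ with absolute indiscernibles $U$, produce a complete sentence $\phi^*_\kappa$ homogeneously characterizing $\kappa^{\aleph_\alpha}$. In the $\kappa^\omega$ case the structure is a tree of height $\omega$ whose branches enumerate $X^\omega$; the \cite{Soul1} construction replaces this by a tree-like structure whose branches enumerate $X^{\aleph_\alpha}$. As before, at each node $u$ one attaches a copy $M(u,\cdot)$ of a model of $\phi_\kappa$ and identifies the set of immediate successors of $u$ with the set $U(u,\cdot)$ of absolute indiscernibles of that copy. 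Since $\phi_\kappa$ homogeneously characterizes $\kappa$, the predicate $U(u,\cdot)$ has size at most $\kappa$ in every model of $\phi^*_\kappa$, so the encoded tree is $\le\kappa$-splitting and carries at most $\kappa^{\aleph_\alpha}$ branches.

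With $\phi^*_\kappa$ in hand I would reproduce the two-case count. If every attached copy $M(u,\cdot)$ is a maximal model of type $(\kappa,\lambda)$, the tree is $\lambda$-splitting and the resulting model has size $\max\{\kappa,\lambda^{\aleph_\alpha}\}=\kappa$ by the hypothesis $\lambda^{\aleph_\alpha}<\kappa$, and it is maximal. If instead every $M(u,\cdot)$ is maximal of type $(\kappa,\kappa)$, the tree is $\kappa$-splitting and yields a maximal model of size $\max\{\kappa,\kappa^{\aleph_\alpha}\}=\kappa^{\aleph_\alpha}$. To exclude maximal models of any intermediate size, I would use the arithmetic fact that for $\mu<\kappa$ with $\mu^{\aleph_\alpha}>\kappa$ one has $\kappa^{\aleph_\alpha}\le(\mu^{\aleph_\alpha})^{\aleph_\alpha}=\mu^{\aleph_\alpha}\le\kappa^{\aleph_\alpha}$, so $\mu^{\aleph_\alpha}=\kappa^{\aleph_\alpha}$; thus the branch count, and hence the size of any model, collapses onto $\kappa$ or $\kappa^{\aleph_\alpha}$, with no models larger than $\kappa^{\aleph_\alpha}$.

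The main obstacle is not the two-case bookkeeping but checking that the height-$\aleph_\alpha$ construction of \cite{Soul1} is receptive and mergeable in the sense of Notation~\ref{fixvoc1}, uniformly for countable $\alpha$: one must verify that the absolute indiscernibles of the guest copies remain absolute indiscernibles in the merger, that splitting is genuinely governed by the type $(\kappa,\lambda)$ of the local copies, and that completeness together with the homogeneous characterization of $\kappa^{\aleph_\alpha}$ survives the passage from height $\omega$ to height $\aleph_\alpha$. Granting that \cite{Soul1} supplies a complete sentence with the same successor/restriction interface that Theorem~\ref{ltoomega} exploited for $\alpha=0$, every step above is a direct transcription of the proof of Theorem~\ref{kkomega}.
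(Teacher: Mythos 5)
Your proposal is correct and follows essentially the same route as the paper: the paper proves Theorem~\ref{kkalpha} precisely by repeating the proof of Theorem~\ref{kkomega} with the $\kappa^{\aleph_\alpha}$-characterization of \cite{Soul1} substituted for the $\kappa^\omega$-construction of \cite{SouldatosCharacterizableCardinals}, using Theorem~\ref{Thm:ManyMaximal} to obtain $(\kappa,\lambda)$ maximal models and the same two-case splitting count plus the arithmetic collapse $\mu^{\aleph_\alpha}=\kappa^{\aleph_\alpha}$ for $\mu<\kappa$ with $\mu^{\aleph_\alpha}>\kappa$. Your added verification that the merger interface of Notation~\ref{fixvoc1} survives the passage to height $\aleph_\alpha$ is exactly the detail the paper delegates to \cite{Soul1}.
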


\section{Consistency of Maximal models in  many cardinalities}\label{ManyCardinalities}
\numberwithin{Thm}{section}
\setcounter{Thm}{0}

In this section we construct a complete $\lomegaone$-sentence that
consistently admits maximal models in  many cardinalities.
We first give an easy argument to find maximal models in $\kappa$ and
$2^\kappa$ when $\kappa$ is homogeneously characterized.

In \cite{BaumgartnersHanfNumber}, Baumgartner
used independent families of sets to prove that if $\kappa$ is homogeneously characterizable, then
the same is true for $2^{\kappa}$. A similar result is  Theorem 4.29
of \cite{Souldatoscharpow} where the assumption of homogeneously characterizability of $\kappa$ is relaxed to a
$\kappa$ being characterized by a linear order.
Given the machinery of homogeneously characterizable cardinals and
mergers, our transfer theorem \ref{powerset} has a rather elementary proof.

\begin{Thm}\label{powerset} Suppose that $\phi$ is a complete $\lomegaone$-sentence  that
homogeneously
characterizes $\kappa$ with absolute indiscernibles in the predicate
$P$ and $\phi$ has no maximal models below $\kappa$. Then there is a complete $\lomegaone$-sentence $\phi_\kappa$
that characterizes $2^\kappa$.

Furthermore  for every $\lambda\le\kappa$, there exists a maximal
model of $\phi_\kappa$ of size $\max\{\kappa,2^\lambda\}$ and every maximal model has one of these cardinalities.
\end{Thm}

\begin{proof} By Theorem~\ref{Thm:ManyMaximal}, we can assume $\phi$ has
maximal models of type $(\kappa,\lambda)$ with absolute
indiscernibles $B$, for all $\lambda\le\kappa$.

Let $T$ be the $\aleph_0$-categorical first order theory saying $U$
and $V$ are disjoint infinite sets and $E$  is extensional so that
$E(v,\cdot)$ defines a family of subsets $X_v$ of $U$. Requiring that
every finite Boolean combination of the $X_v$ is non-empty (and
dually for the $Y_u = \{v\colon v \in u\}$) gives an
$\aleph_0$-categorical theory such that for every model $M$, $|V^M|
\leq 2^{|U|}$.

Merge $\mu =\bigwedge T$ with the complete sentence $\phi$ from
Theorem~\ref{Thm:ManyMaximal} identifying $U$ with $B$. Let
$\phi_\kappa=\chi_{\psi,B,\mu,U}$. By Fact \ref{mergeprop} (1), $\phi_\kappa$ is
a complete sentence of $\tau' = \tau_\psi \cup \{U,V,E\}$.

Now  $M$,
a maximal model of $\phi$ with type $(\kappa,\lambda)$, yields a
maximal model of $\phi_\kappa$ with cardinality $\max\{\kappa,2^\lambda\}$.
There can be no other maximal models as if $(M,N)$ is a maximal model of the merger $\phi_\kappa$  then $M$ is maximal and  if $|U^M| =|B^N|=
\lambda$, then $|V^N|$ must be $2^\lambda$.
\end{proof}

Exactly what this says about the cardinality of maximal models
depends on the cardinal arithmetic. We just give some sample
applications of Theorem~\ref{powerset} with various choices
of the $\lambda$ and of the set theoretic hypotheses.

We describe below some ways to arrange the values the powerset function assumes  on the interval $[\mu,\kappa]$ to illustrate the
effect of the next theorem.

\begin{Cor}\label{maxmodels} Assume $\kappa$ is a homogeneously characterizable cardinal and the characterizing sentence has no
maximal models below $\kappa$. Let $\mu$ be the least
cardinal such that $2^{\mu}\ge\kappa$.
Then there is a complete $\lomegaone$-sentence $\phi_\kappa$ with maximal models in {\em exactly} the cardinalities
$\kappa$ and
$2^\lambda$, for all $\mu\le\lambda\le\kappa$.
\end{Cor}

The difficulty is that it is impossible to specify in ZFC the equalities/inequalities among the $2^{\lambda}$'s.  In ZFC we
cannot specify  them as $\aleph$'s.
%In fact, we can show that there is a \emph{single} complete sentence $\phi$ so that for every ordinal $\beta$, consistently there
%is a sequence of cardinals $(\aleph_{\alpha_i}|i\le\beta)$ and $\phi$ has maximal  models in
%$(\aleph_{\alpha_i}|i\le\beta)$. The $\aleph_{\alpha_i}$'s can be assumed to be distinct, but they stay bounded below
%$2^{2^{\aleph_1}}$.
But, using Easton's theorem we can establish a number of possibilities.

%
%to ensure that (a) $2^{\aleph_1}$ is larger than
%$\aleph_{\beta+1}$ and (b) the values of the powerset function on (the successor cardinals of) the interval
%$[\aleph_1,\aleph_{\beta+1}]$ are carefully controlled.  The $\aleph_{\alpha_i}$'s can be assumed to be distinct, but they stay
% bounded below
%$2^{2^{\aleph_1}}$.

\begin{Cor}\label{abetaseq} Let $\mu=\aleph_1$, $\kappa=2^{\aleph_1}$ and $\phi_\kappa$ be from
Corollary~\ref{maxmodels}. If  $\Gamma =(\alpha_i|i< \alpha_0)$
is an increasing sequence of ordinals
and $\cf(\aleph_{\alpha_i})>\aleph_{i+1}$, then  there is a $V^{\Gamma} \models ZFC$ such that $\phi_\kappa$ has maximal models in
exactly the cardinalities
$(\aleph_{\alpha_i}|i< \alpha_0)$ along with the values of the $2^{\aleph_\gamma}$ where $\gamma < \alpha_0$ and $\gamma$ is
a limit ordinal.

\begin{proof}
First  we apply Corollary~\ref{maxmodels} with $\mu=\aleph_1$ and $\kappa=2^{\aleph_1}$. We need the fact that
$\aleph_1$ is homogeneously characterizable, but this follows from \ref{Thm:BKLExamples}, and clearly a complete sentence
characterizing $\aleph_1$ can
have no maximal countable model. Then apply \ref{maxmodels}
The resulting sentence $\phi_\kappa$ from
Corollary~\ref{maxmodels}  has maximal models in all cardinalities
$2^\lambda$, for all $\aleph_1\le\lambda\le 2^{\aleph_1}$. Notice that $\phi_\kappa$ depends only on $\kappa$ and not on the
choice of $\Gamma$.

Next, we create a model $V^{\Gamma}$ of ZFC where  the set $\{2^\lambda|\aleph_1\le\lambda\le 2^{\aleph_1};\;\lambda \text{ a
successor}\}$ equals the set $\{\aleph_{\alpha_i}|\alpha_i \in \Gamma\}$, which proves the statement.
  We describe the cardinal arithmetic requirements
on $V^{\Gamma}$ carefully. Using Easton forcing,
 we ensure first that $2^{\aleph_1}$ equals $\aleph_{\alpha_0}$.
So, $\{2^\lambda|\aleph_1\le\lambda\le 2^{\aleph_1};\;\lambda \text{ a
successor}\}=\{2^{\aleph_{i+1}}|i<\alpha_0\}$.
Then using   the assumption on $\cf(\aleph_{\alpha_i})$,
Easton guarantees as well that in $V^{\Gamma}$,
$2^{\aleph_{i+1}}=\aleph_{\alpha_i}$, for all $i<\alpha_0$. So $\Gamma$ indexes a part of the range of the function giving the cardinality of power sets.

\end{proof}
\end{Cor}
We know a bit more.

\begin{enumerate}
\item
 If $\alpha_0>\omega$, the complete sentence given by Corollary \ref{abetaseq} will have maximal models in
other cardinalities than
$(\aleph_{\alpha_{i}}|i<\alpha_0)$.
 For instance,
for those $i$ where $\lambda =\aleph_i$ is singular,
Easton's theorem does not control the $\aleph$-index of $2^\lambda$, although we know there is a maximal model in that
cardinality.

 \item  Although the sentence $\phi_\kappa$ given by Corollary \ref{abetaseq} has maximal models in cardinalities that are
bounded by
$2^{2^{\aleph_1}}$, the same idea can be applied to other characterizable cardinals. However, since characterizable cardinals
are bounded
by $\beth_{\omega_1}$, the cardinalities where the maximal models occur are also
bounded by $\beth_{\omega_1}$.
\item The complete sentences given by Corollaries \ref{maxmodels} and \ref{abetaseq}  do not have arbitrarily large
models.
\end{enumerate}

\begin{Cor} For complete $\lomegaone$-sentences the number of cardinalities where maximal models occur is not absolute.
\end{Cor}

\section{Conclusion}\label{Sec:Conclusion}

The existence of maximal models in several cardinalities suggests the following strengthening of earlier question concerning the
number of models in a cardinal that is characterized.

\begin{question}\label{OpenQuestion1} Is there a complete $\lomegaone$-sentence $\phi$ which has at least one maximal
model in an uncountable cardinal $\kappa$, but less than $2^\kappa$
many  models of cardinality $\kappa$?
\end{question}

In particular, a negative answer to Open Question \ref{OpenQuestion1} implies a negative answer to the following Open
Question \ref{OpenQuestion2}, which was asked in \cite{BKL} and which in return relates to old conjectures
of S. Shelah.

\begin{question}[\cite{BKL}]\label{OpenQuestion2} Is there a complete $\lomegaone$-sentence which characterizes  an uncountable
cardinal
$\kappa$ and it has less than $2^\kappa$ many models in cardinality $\kappa$?
\end{question}
%
%\sidebar{IS I moved the absoluteness question from the end of section 3 here.}

All the examples in this paper have maximal models in some cardinalities and using set-theory we can identify the maximality
cardinals in the $\aleph$-hierarchy. Our examples can not be used to settle whether the statement ``$\phi$ has a maximal model''
is absolute. We noticed already in the comments preceding Corollary \ref{nonabsmax} that existence of a countable maximal model
and existence of an uncountable model are absolute notions. So, it is necessary that a proposed counterexample will
consistently have a maximal model in an  uncountable cardinality. By Lemma 5.8 of \cite{BaldwinAmalgamation}, the property that
an $\lomegaone$-sentence has arbitrarily large models is absolute. This further implies that the proposed counterexample will
have arbitrarily large models in \emph{all} models of ZFC.

\begin{question}
Given an $\lomegaone$-sentence $\phi$, is the following statement absolute for transitive models of ZFC?
``$\phi$ has a maximal model in an uncountable cardinality''.

More precisely, do they exist two transitive models of ZFC, $V\subset W$, $\phi\in \lomegaone^V$, both $V$ and $W$ satisfy
that ``$\phi$ has arbitrarily large models'', and $V,W$ disagree on the statement
``$\phi$ has a maximal model in an uncountable cardinality''?
\end{question}

Finally, we want to stress the differences in techniques of this paper from \cite{BKSoul}. The main idea behind
\cite{BKSoul} is certain combinatorial properties of bipartite graphs.
Here the main construction is a refinement of old ideas,
e.g. the characterization of $\kappa^+$ by a $\kappa^+$-like linear order in Section \ref{template} and the characterization of
$\kappa^\omega$ using results from \cite{SouldatosCharacterizableCardinals} in Section \ref{sec:ltoomega}, combined with repeated
use of sets of absolute indiscernibles.
All the examples presented here are \emph{complete} sentences with maximal models in more than one cardinality,
which \emph{do not} have arbitrarily large models. In \cite{BKSoul} the examples are \emph{incomplete} sentences with
maximal
models in more than one cardinality, which \emph{do} have arbitrary large models.

{\bf Note:} Stimulated by early versions of this paper, Baldwin and Shelah began
the paper `The Hanf number for extendability and related phenomena'.
They construct (under mild set theoretic hypotheses which are
expected to be eliminated) a complete sentence of $\lomegaone$ with
maximal models arbitrarily high below the first measurable. Note that
every model above the first measurable has a proper
$\lomegaone$-elementary extension.  In contrast to this result the
method discussed in the last paragraph seem to be limited to
counterexamples below $\beth_{\omega_1}$. Can one find a sentence
$\phi$ with maximal models bounded somewhere between these bounds? If
not, can one explain why there is such an immense gap? Under ZFC +
"there exists a measurable cardinal", no compete sentence of
$\lomegaone$ has arbitrarily large maximal models. Under ZFC + `no
measurable cardinals', our only example with a maximal model of
cardinality beyond $\beth_{\omega_1}$ has arbitrarily large maximal
models. Is it always true that under ZFC + ``there are no measurable
cardinals'', if there is a maximal model of cardinality at least
$\beth_{\omega_1}$, then there are arbitrarily large maximal models.
Does this make the Hanf number for the existence of a maximal model
(with no measurable) $\beth_{\omega_1}$ or can more counterexamples
be constructed?

\end{document}